\newtheorem{theo}{\bf Theorem}[section]
\newtheorem{lemma}{\bf Lemma}[section]
\newtheorem{coro}{\bf Corollary}[section]
\newtheorem{rem}{\bf Remark}[section]
\newcommand{\D}{{\mathcal D}}
\newcommand{\F}{{\mathcal F}}
\newcommand{\spn}{{\rm span}}
\newcommand{\Hp}{{\mathcal H}}
\newcommand{\Z}{{\mathbb Z}}
\newcommand{\Q}{{\Bbb Q}}
\newcommand{\R}{{\mathbb R}}
\newcommand{\bea}{\begin{eqnarray*}}
\newcommand{\eea}{\end{eqnarray*}}
\newcommand{\be}{\begin{eqnarray}}
\newcommand{\ee}{\end{eqnarray}}
\newcommand{\ve}{\boldsymbol}
\newcommand{\prob}{\mbox{\rm Prob}\,}
\numberwithin{equation}{section}
\begin{document}

\title[Average Behavior of the Frobenius Numbers]{Integer Knapsacks: \\ Average Behavior of the Frobenius Numbers}
\author{Iskander Aliev}
\address{School of Mathematics and Wales Institute of Mathematical and Computational Sciences, Cardiff University, Senghennydd Road, CARDIFF, Wales, UK}
\email{alievi@cf.ac.uk}

\author{Martin Henk}
\address{Institut f\"ur Algebra und Geometrie, Otto-von-Guericke
Universit\"at Mag\-deburg, Universit\"atsplatz 2, D-39106-Magdeburg,
Germany} \email{henk@math.uni-magdeburg.de}


\begin{abstract}
Given a primitive integer vector ${\ve a}\in\Z^N_{>0}$,  the largest integer $b$ such that the knapsack polytope
$P=\{{\ve x}\in \R^N_{\ge 0}: \langle{\ve a}, {\ve x}\rangle=b\}$
contains no integer point is called the Frobenius number of ${\ve a}$. We show that the asymptotic growth of the Frobenius number in average is
significantly slower than the growth of the maximum Frobenius number. More precisely, we prove that it does not essentially exceed $||{\ve a}||_\infty^{1+1/(N-1)}$,
where $||\cdot||_\infty$ denotes the maximum norm.
\end{abstract}

\maketitle

\section{Introduction and statement of results}

%
For a positive integral vector ${\ve a}=(a_1, a_2, \ldots, a_N)\in\Z^N_{>0}$ with $\gcd({\ve a})=\gcd(a_1, a_2,\ldots, a_N)=1$  and a positive integer $b$ the {\em knapsack polytope} $P=P({\ve a},b)$ is defined as
\bea
P=\{{\ve x}\in \R^N_{\ge 0}: \langle{\ve a}, {\ve x}\rangle=b\}\,,
\label{P}
\eea
where $\langle\cdot, \cdot\rangle$ denotes the inner product. The
integer programming feasibility problem:
\be
\mbox{Does the polytope}\; P\; \mbox{contain an integer vector?}
\label{Knapsack}
\ee
is called the  {\em integer knapsack problem} and is well-known to be NP-complete (cf., e.g., Karp \cite{Karp}).

Given the input vector ${\ve a}\in\Z^N$,  the largest integral value $b$ such that the instance of (\ref{Knapsack}) is
infeasible is called the {\em Frobenius number} of ${\ve a}$\,, denoted by $g_N=g_N({\ve a})$.
The Frobenius number plays an important role in the analysis of integer programming algorithms (see, e.g.,  Aardal and Lenstra \cite{Aardal_Lenstra}, Hansen and Ryan \cite{Hansen_Ryan}, and Lee, Onn and Weismantel \cite{Lee_Onn_Weismantel}) and, vice versa, integer programming algorithms are known to be an effective tool for computing the Frobenius number (see Beihoffer et al \cite{BHNW}). The general problem of finding $g_N$ has been traditionally refereed to as the {\em Frobenius problem}. There is a rich literature on the various aspects of this question. For an impressive list of references see Ramirez Alfonsin \cite{Alf}.

Computing $g_N$ when $N$ is not fixed is an NP-hard problem (Ramirez Alfonsin \cite{Alf1}). For any fixed $N$ the Frobenius number $g_N$ can be found in polynomial time by a sophisticated algorithm due to Kannan \cite{Kannan}. One should mention here that, due to its complexity, Kannan's algorithm has apparently never been implemented.

From the viewpoint of analysis of integer programming algorithms, upper bounds on the Frobenius number $g_N({\ve a})$ in terms of the input vector ${\ve a}$ are of primary interest. Known results include classical upper
bounds by Erd\H{o}s and Graham \cite{EG}
\be g_N\le 2 a_N\left [\frac{a_1}{N}\right ]-a_1\,, \label{EGb}\ee
by Selmer \cite{Se}
\be g_N\le 2 a_{N-1}\left [\frac{a_N}{N}\right ]-a_N\,, \label{Sb}\ee
by Vitek \cite{Vi}
\be g_N\le \left [\frac{(a_2-1)(a_N-2)}{2}\right ]-1\, \label{Vb}\ee
 and by many other authors,
as well as more recent results by Beck, Diaz, and Robins \cite{BDR}
\be g_N\le \frac{1}{2}\left(\sqrt{a_1 a_2
a_3(a_1+a_2+a_3)}-a_1-a_2-a_3\right)\,, \label{BDRb}\ee
(assuming in (\ref{EGb})--(\ref{BDRb}) $a_1\le a_2\le \ldots\le a_N$) and by Fukshansky and Robins \cite{Lenny}, who produced an upper
bound
in terms of the covering radius of a lattice related to the integers
$a_1, \ldots, a_N$.

In the most interesting case $a_i\sim a_j$, $i,j=1,\ldots,N$,  all known upper bounds are of order $||{\ve a}||_\infty^2$, where $||\cdot||_\infty$ denotes the maximum norm. This is especially transparent
in the case of the results (\ref{EGb})--(\ref{BDRb}). For $N=3$ Beck and Zacks \cite{BZ} conjectured that, except of a special family of input vectors,
the Frobenius number does not exceed $C(a_1 a_2 a_3)^{\alpha}$ with absolute constants $C$ and $\alpha<2/3$. This conjecture has been disproved by
 Schlage-Puchta \cite{SP}. As a special case, the latter result implies that, roughly speaking, cutting off special families of input vectors cannot make the order of upper bounds smaller than $||{\ve a}||_{\infty}^2$.
In general, one can show that the quantity $||{\ve a}||_{\infty}^2$ plays a role of a limit for estimating the Frobenius number $g_N$ from above.

The next natural and important question is to derive a good upper estimate for the Frobenius number of a ``typical'' input vector ${\ve a}$. This problem appears to be hard, and to the best of our knowledge it has firstly been systematically investigated by V.~I.~Arnold, see, e.g., \cite{Arnold99, Arnold06, Arnold07}. In particular, he conjectured that $g_N(\bf a)$ grows like $T^{1+1/(N-1)}$ for a ``typical'' $\bf a$ of $1$-norm $T$.
Recently,  Bourgain and  Sinai \cite{BS} proved a statement in the spirit of that conjecture, which says, roughly speaking, that
\begin{equation}
 \prob_{\infty,\alpha}\left(g_N({\bf a})/T^{1+1/(N-1)}\geq D \right) \leq \epsilon(D),
\label{eq:BS}
\end{equation}
where $\prob_{\infty,\alpha}(\cdot)$ is meant with respect to the uniform
distrubition among all points in the set
\begin{equation*}
   G_{\infty,\alpha}(N,T)=\{{\bf a}\in\Z^N_{>0}: \gcd({\ve a})=1,\,\Vert {\bf a}\Vert_\infty\leq T,\,a_i> \alpha\,T,\,1\leq i\leq N\},
\end{equation*}
 where $0<\alpha<1$ is a fixed number. The number $\epsilon(D)$ does not depend on $T$ and tends to zero as $D$ approaches infinity.
Our main result below also implies that \eqref{eq:BS} (see Corollary \ref{main_coro})  holds for the more general and natural  case $\alpha=0$.

In order to state our main theorem, we have to fix some further notation. Put $\Sigma ({\ve a})=\sum_{i=1}^N
a_i$ and $\Pi({\ve a})=(\prod_{i=1}^N a_i)^{1/(N-1)}$.
Theorem 2.5 of Kannan \cite{Kannan} indicates that, from the geometric viewpoint, it is more convenient to study the quantity
\bea f_N({\ve a})=g_N({\ve a})+ \Sigma ({\ve a})\,.\eea
Clearly, $f_N=f_N({\ve a})$ is
the largest integer which is not a {\em positive} integer
combination of $a_1, \ldots, a_N$.
In this paper we study the  asymptotic behavior of the ratio $f_N({\ve a})/s({\ve a})$ with
\bea
s({\ve a})=\frac{\sum_{i=1}^N ||{\ve a}[i]||a_i}{||{\ve a}||^{1-1/(N-1)}}\,,
\eea
 where ${\ve a}[i]=(a_1,\ldots,a_{i-1},a_{i+1},\ldots,a_N)$ and $||\cdot||$ denotes the Euclidean norm. The geometric meaning of the normalization $s({\ve a})^{-1}$ is explained in Section \ref{FL}. With these notation let
%
\bea
G(N, T)= \{{\ve a}\in \Z^N_{> 0}: \gcd({\ve a})=1\,, ||{\ve a}||\le T\},
\eea
and let $\prob_{N,T}(\cdot)$ be the uniform probability distribution on $G(N, T)$.
The main result of the paper is
\begin{theo}
For $N\ge 3$ the inequality
\bea
\prob_{N,T}(f_N({\ve a})/s({\ve a})>t)\ll_N t^{-2}\,
\eea
holds. Here $\ll_N$ denotes the Vinogradov symbol with the constant depending on $N$ only.
\label{main}
\end{theo}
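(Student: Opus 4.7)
The plan is to combine a geometric reformulation of the Frobenius number with Minkowski-type inequalities and elementary lattice point counting, broadly in the spirit of Kannan and of Bourgain--Sinai. The first step, to be carried out in the promised Section \ref{FL}, is to express $f_N(\ve a)$ as the inhomogeneous minimum (covering radius) of the $(N-1)$-dimensional lattice $\Lambda(\ve a)=\{\ve z\in\Z^N:\langle\ve a,\ve z\rangle=0\}$ with respect to a simplex $K(\ve a)$ obtained from the knapsack polytope, along the lines of Kannan's Theorem 2.5. Since $\gcd(\ve a)=1$ one has $\det\Lambda(\ve a)=\|\ve a\|$, and a direct computation should identify $s(\ve a)$ as precisely the natural scaling that makes the ratio $f_N(\ve a)/s(\ve a)$ intrinsic to the pair $(\Lambda(\ve a),K(\ve a))$, up to a constant depending only on $N$.

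The second step is to control the covering radius by the successive minima of $(\Lambda(\ve a),K(\ve a)-K(\ve a))$. Jarn\'ik's inequality (or a direct argument exploiting the simplex structure) gives $\mu\ll_N\lambda_{N-1}$, and Minkowski's second theorem gives
$$\lambda_{N-1}(\Lambda(\ve a),K(\ve a)-K(\ve a))\ll_N\frac{\det\Lambda(\ve a)}{\vol(K(\ve a)-K(\ve a))\,\lambda_1(\Lambda(\ve a),K(\ve a)-K(\ve a))^{N-2}}.$$
Unwinding the normalization, this should collapse to an inequality of the shape
$$\frac{f_N(\ve a)}{s(\ve a)}\ll_N \lambda_1(\Lambda(\ve a),K(\ve a)-K(\ve a))^{-(N-2)},$$
so the event $\{f_N/s>t\}$ is contained in $\{\lambda_1<c_N t^{-1/(N-2)}\}$, i.e.\ $\Lambda(\ve a)$ must contain a nonzero vector which is short in the norm induced by $K(\ve a)-K(\ve a)$.

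The third step is the probabilistic estimate. The event above is witnessed by some primitive $\ve x\in\Z^N\setminus\{0\}$ with $\langle\ve a,\ve x\rangle=0$ and coordinate bounds $|x_i|$ controlled by the $a_i$. For \emph{fixed} $\ve x$ the set of $\ve a\in G(N,T)$ orthogonal to $\ve x$ lies in the sublattice $\ve x^\perp\cap\Z^N$ of determinant $\|\ve x\|$, hence contains $O_N(T^{N-1}/\|\ve x\|)$ points in $\{\|\ve a\|\le T\}$. Summing this count over the admissible range of $\ve x$, and dividing by $|G(N,T)|\gg_N T^N$, should yield the desired bound $\ll_N t^{-2}$.

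The hard part will be the third step. The "short vector" condition couples $\ve x$ to $\ve a$ through the weights $a_i$, so one cannot independently sum over $\ve x$ and count $\ve a$ in a naive way. Handling this coupling cleanly---presumably by a dyadic decomposition over the sizes of the coordinates $|x_i|$ (and correspondingly of $a_i$), together with a careful swap of the order of summation---is what will produce the exact exponent $-2$ rather than something weaker, and matching it against the specific form of $s(\ve a)$ is the technical heart of the argument.
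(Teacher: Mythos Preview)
Your steps 1--2 match the paper's Section~\ref{FL}: Kannan's identity plus Jarn\'ik give exactly $f_N({\ve a})/s({\ve a})\ll_N\lambda_{N-1}(\Gamma_{\ve a})$ for the determinant-one lattice $\Gamma_{\ve a}=\|{\ve a}\|^{-1/(N-1)}\Lambda_{\ve a}$ and the Euclidean ball. The gap is the passage from $\lambda_{N-1}$ to $\lambda_1$. Minkowski's second theorem indeed yields $\lambda_{N-1}\ll_N\lambda_1^{-(N-2)}$, so $f_N/s>t$ forces $\lambda_1(\Gamma_{\ve a})<c_Nt^{-1/(N-2)}$, i.e.\ some nonzero ${\ve x}\in\Z^N$ with $\langle{\ve a},{\ve x}\rangle=0$ and $\|{\ve x}\|\ll t^{-1/(N-2)}T^{1/(N-1)}=:R$. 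But summing $\#\{{\ve a}\in G(N,T):{\ve a}\perp{\ve x}\}\ll T^{N-1}/\|{\ve x}\|$ over $0<\|{\ve x}\|\le R$ and dividing by $T^N$ gives only
\[
\prob_{N,T}\bigl(f_N/s>t\bigr)\ll_N T^{-1}R^{N-1}\ll_N t^{-(N-1)/(N-2)},
\]
which equals $t^{-2}$ solely when $N=3$. This is not a defect of the counting: the proportion of ${\ve a}\in G(N,T)$ with $\lambda_1(\Gamma_{\ve a})<\epsilon$ is genuinely of order $\epsilon^{N-1}$, so no dyadic refinement in step~3 can repair the information lost in step~2. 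The coupling you flag is, once the inradius has been absorbed into $s({\ve a})$, essentially absent; the obstacle lies earlier.

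The paper obtains $t^{-2}$ for all $N\ge3$ by retaining the full successive-minima profile rather than collapsing to $\lambda_1$. From $\lambda_{N-1}>\lambda$ one extracts (Lemma~\ref{lambda}) some gap $\lambda_{i+1}/\lambda_i>c\,\lambda^{2/(N-2)}$, and then appeals to Schmidt's equidistribution theorems for sublattices of $\Z^N$ (Theorems~\ref{Schmidt_th_2} and~\ref{Schmidt_th_5}): the set of similarity classes with $\lambda_{i+1}/\lambda_i\ge u$ has $\mu$-measure $\ll u^{-i(N-1-i)}\le u^{-(N-2)}$, and the proportion of primitive $(N{-}1)$-lattices in such a set is asymptotically this measure. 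With $u\asymp t^{2/(N-2)}$ one gets exactly $t^{-2}$. So the missing ingredient is not a sharper elementary count over a single short vector but the use of Schmidt's results on the distribution of lattice shapes.
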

In terms of $g_N$, $T$ and $\prob_{\infty,0}(\cdot)$ we obtain the following corollary.
\begin{coro}
For $N\ge 3$ the inequality
\bea
\prob_{\infty,0}(g_N({\ve a})/T^{1+1/(N-1)}>t)\ll_N t^{-2}\,
\eea
holds.
\label{main_coro}
\end{coro}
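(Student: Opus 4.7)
The plan is to deduce Corollary \ref{main_coro} from Theorem \ref{main} through three comparisons: replacing $g_N$ by the slightly larger $f_N$; replacing the deterministic normalization $T^{1+1/(N-1)}$ by the $\ve a$-dependent $s(\ve a)$ via a pointwise inequality valid on the whole sample space; and switching from the uniform measure on the $\ell_\infty$-box $G_{\infty,0}(N,T)$ to the uniform measure on the Euclidean set $G(N,\sqrt{N}T)$ to which Theorem \ref{main} applies directly.

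For the first two steps, I would note that $f_N(\ve a)=g_N(\ve a)+\Sigma(\ve a)\geq g_N(\ve a)$, and then establish a uniform estimate $s(\ve a)\leq C_N T^{1+1/(N-1)}$ on all of $G_{\infty,0}(N,T)$. The estimate follows from Cauchy--Schwarz, which gives $\Sigma(\ve a)\leq \sqrt{N}\,\Vert\ve a\Vert$, together with the trivial $\Vert\ve a[i]\Vert\leq\Vert\ve a\Vert$, so that $\sum_i a_i\Vert\ve a[i]\Vert\leq\sqrt{N}\,\Vert\ve a\Vert^{2}$; since the exponent in the denominator of $s(\ve a)$ is $(N-2)/(N-1)$, this produces $s(\ve a)\leq \sqrt{N}\,\Vert\ve a\Vert^{N/(N-1)}$, and the bound $\Vert\ve a\Vert\leq\sqrt{N}T$ then yields an explicit $C_N$ depending only on $N$. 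Consequently
$$\{g_N(\ve a)/T^{1+1/(N-1)}>t\}\subseteq\{f_N(\ve a)/s(\ve a)>t/C_N\}.$$

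For the third step, $\Vert\ve a\Vert\leq\sqrt{N}\,\Vert\ve a\Vert_\infty$ gives the containment $G_{\infty,0}(N,T)\subseteq G(N,\sqrt{N}T)$, and a standard M\"obius-inversion count shows that both sets have cardinality of order $T^N$ with constants depending only on $N$. Hence $\prob_{\infty,0}(\cdot)\leq K_N\,\prob_{N,\sqrt{N}T}(\cdot)$ on every event, and applying Theorem \ref{main} to $\{f_N/s(\ve a)>t/C_N\}$ yields the claimed bound $\ll_N t^{-2}$. The only input not already packaged into Theorem \ref{main} is this density comparison between primitive lattice points in a box and in a comparable Euclidean ball intersected with the positive orthant; verifying that the primitivity condition does not disturb the $T^N$-order of the two counts is the one place where some care is needed, though the verification is entirely classical.
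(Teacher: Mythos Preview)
Your proposal is correct and follows essentially the same route as the paper's own proof: replace $g_N$ by $f_N$, bound $s({\ve a})\ll_N T^{1+1/(N-1)}$ pointwise, and pass from the uniform measure on $G_{\infty,0}(N,T)$ to that on $G(N,\sqrt{N}\,T)$ via the inclusion $G_{\infty,0}(N,T)\subseteq G(N,\sqrt{N}\,T)$ together with the fact that both sets have cardinality $\asymp_N T^N$. The only cosmetic difference is that the paper cites Schmidt's Theorem~2 for the primitive-point count where you invoke M\"obius inversion; the arguments are otherwise identical.
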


Beihoffer et al \cite{BHNW} performed extensive computations which lead to a conjecture that $\Pi({\ve a})$ is a good predictor for the average value of
$f_N({\ve a})$. Indeed, they conjectured that the average value of $f_N({\ve a})/\Pi({\ve a})$ is asymptotically equal to a small constant. An analogous conjecture for $N=3$ was proposed in Davison \cite{Dav}.

One should remark here that $\Pi({\ve a})$ is essentially a lower bound for $f_N$.
The main result of Aliev and Gruber \cite{Aliev-Gruber} states that the inhomogeneous minimum
$\mu_0=\mu_0(S_{N-1})$ of the standard simplex
\bea S_{N-1}=\{(x_1,\ldots,x_{N-1})\in \R_{\ge 0}^{N-1}:
\sum_{i=1}^{N-1}x_i \le 1\}\,\eea
is a sharp lower bound for the ratio $f_N({\ve a})/\Pi({\ve a})$.

The next theorem answers a question similar to the conjecture of Beihoffer et al with respect to a different  normalization of $f_N$.
\begin{theo}
For $N\ge 3$ we have
\bea
\sup_{T}\frac{\sum_{{\ve a}\in G(N,T)}f_N({\ve a})/s({\ve a})}{\# G(N,T)}\ll_N 1\,.
\eea
\label{Asymptotic_bound_1}
\end{theo}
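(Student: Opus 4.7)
The plan is to derive Theorem \ref{Asymptotic_bound_1} directly from Theorem \ref{main} via the standard layer-cake (tail integration) formula for the expectation of a nonnegative random variable. Since $f_N({\ve a}) \ge 0$ and $s({\ve a}) > 0$ for every ${\ve a} \in \Z^N_{>0}$, the quantity $X({\ve a}) := f_N({\ve a})/s({\ve a})$ is a nonnegative random variable on the finite probability space $(G(N,T), \prob_{N,T})$. Consequently,
\[
\frac{\sum_{{\ve a} \in G(N,T)} f_N({\ve a})/s({\ve a})}{\# G(N,T)} \;=\; \E_{\prob_{N,T}}\!\bigl[X\bigr] \;=\; \int_0^\infty \prob_{N,T}\!\bigl(X > t\bigr)\, dt.
\]

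Let $C_N$ denote the implicit Vinogradov constant furnished by Theorem \ref{main}, so that $\prob_{N,T}(X > t) \le C_N t^{-2}$; the trivial estimate $\prob_{N,T}(X > t) \le 1$ is also available. Splitting the integral at the threshold $t_0 := \sqrt{C_N}$ (where the two bounds coincide) and retaining the stronger bound on each piece, I obtain
\[
\int_0^{t_0}\!\prob_{N,T}(X > t)\, dt \,\le\, t_0 \qquad \text{and} \qquad \int_{t_0}^\infty\!\prob_{N,T}(X > t)\, dt \,\le\, \int_{t_0}^\infty\! C_N\, t^{-2}\, dt \,=\, \sqrt{C_N},
\]
so $\E_{\prob_{N,T}}[X] \le 2\sqrt{C_N}$. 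Since both $C_N$ and the cut-off $t_0$ are independent of $T$, passing to the supremum over $T$ preserves the bound and yields Theorem \ref{Asymptotic_bound_1} with implicit constant $2\sqrt{C_N}$.

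The derivation above is essentially mechanical, and the genuine content of Theorem \ref{Asymptotic_bound_1} lies entirely in the tail bound of Theorem \ref{main}. The only structural requirement on that tail bound is that the decay exponent strictly exceed $1$, so that the contribution from large $t$ is integrable; the exponent $2$ delivered by Theorem \ref{main} clears this threshold comfortably. Consequently I expect no genuine obstacle beyond invoking Theorem \ref{main} itself: had the exponent in Theorem \ref{main} been $1$ or less, this averaging argument would fail and a finer analysis would be required, but at exponent $2$ the tail integral converges without fuss.
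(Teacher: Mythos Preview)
Your argument is correct and follows essentially the same approach as the paper: both derive the bound on the average from the tail estimate via the layer-cake formula $E(X)=\int_0^\infty \prob(X>t)\,dt$, splitting the integral into a bounded piece near $0$ and an integrable $t^{-2}$ tail. The only cosmetic difference is that the paper re-enters the proof of Theorem~\ref{main} and works with the auxiliary variable $Y_T$ (using Lemma~\ref{cdfs} and the a~priori lower bound \eqref{l_bound} to fix the split point), whereas you invoke Theorem~\ref{main} as a black box and split at $t_0=\sqrt{C_N}$; your route is, if anything, slightly more streamlined.
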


Observe that for all ${\ve a}$ we have $s({\ve a})\ll_N||{\ve a}||_\infty^{1+1/(N-1)}$. This implies the following result.
\begin{coro}
For $N\ge 3$ we have
\be
\sup_{T}\frac{\sum_{{\ve a}\in G(N,T)}f_N({\ve a})/||{\ve a}||_\infty^{1+1/(N-1)}}{\# G(N,T)}\ll_N 1\,.
\label{average_via_maximum}
\ee
\label{Awesome}
\end{coro}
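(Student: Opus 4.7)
The plan is to deduce Corollary \ref{Awesome} directly from Theorem \ref{Asymptotic_bound_1} by verifying the elementary inequality $s({\ve a}) \ll_N ||{\ve a}||_\infty^{1+1/(N-1)}$ that is flagged in the sentence preceding the corollary's statement. Once this inequality is in hand, the averaged quantity in \eqref{average_via_maximum} is bounded termwise by a constant multiple of the averaged quantity already controlled by Theorem \ref{Asymptotic_bound_1}, and the conclusion follows immediately.

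To establish the inequality, I would estimate the numerator and denominator of
\[
s({\ve a}) = \frac{\sum_{i=1}^N ||{\ve a}[i]||\, a_i}{||{\ve a}||^{1-1/(N-1)}}
\]
separately. Since each $||{\ve a}[i]|| \le ||{\ve a}|| \le \sqrt{N}\,||{\ve a}||_\infty$ and each $a_i \le ||{\ve a}||_\infty$, the numerator is at most $N^{3/2}\,||{\ve a}||_\infty^{2}$. For $N \ge 3$ the exponent $1-1/(N-1)$ is positive and $||{\ve a}|| \ge ||{\ve a}||_\infty$, so the denominator is at least $||{\ve a}||_\infty^{1-1/(N-1)}$. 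Dividing then yields $s({\ve a}) \le N^{3/2}\,||{\ve a}||_\infty^{1+1/(N-1)}$, which is the desired observation.

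As a consequence, the pointwise bound $f_N({\ve a})/||{\ve a}||_\infty^{1+1/(N-1)} \le N^{3/2}\,f_N({\ve a})/s({\ve a})$ holds for every ${\ve a} \in G(N,T)$. Summing over ${\ve a} \in G(N,T)$, dividing by $\#G(N,T)$, and taking the supremum over $T$ shows that the left-hand side of \eqref{average_via_maximum} is at most $N^{3/2}$ times the quantity treated by Theorem \ref{Asymptotic_bound_1}, which is $\ll_N 1$. There is no real obstacle: the corollary is a purely formal consequence of Theorem \ref{Asymptotic_bound_1} combined with the crude norm comparison $||\cdot|| \le \sqrt{N}\,||\cdot||_\infty$, and all the genuine mathematical difficulty is already absorbed into that theorem.
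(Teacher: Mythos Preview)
Your proposal is correct and follows exactly the approach indicated in the paper: the paper simply records the observation $s({\ve a})\ll_N \|{\ve a}\|_\infty^{1+1/(N-1)}$ and says that this implies the corollary, while you have written out the (routine) verification of this bound and the subsequent termwise comparison with Theorem \ref{Asymptotic_bound_1} in full. There is nothing to add.
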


Obviously, the maximum norm $||{\ve a}||_\infty$ in (\ref{average_via_maximum}) can be replaced by any other norm.  Moreover, applying arguments similar to the one given  in the proof of Corollary \ref{main_coro}, one can also replace the Euclidean norm in the definition of $G(N, T)$ by any other norm, which, for example,  for the maximum norm leads to the set  $G_{\infty,0}(N,T)$.

Corollary \ref{Awesome} says that
 the asymptotic growth of the Frobenius number in average is
significantly slower than the growth of the maximum Frobenius number as $||{\ve a}||$ tends to infinity. Moreover, perhaps surprisingly, the average Frobenius number, as $N\rightarrow\infty$, does not essentially exceed $||{\ve a}||_\infty$.

The next result shows that the ratio $f_N({\ve a})/s({\ve a})$ is unbounded along any given ``direction'' ${\ve
\alpha}\in \R^N$,  so that Theorem \ref{Asymptotic_bound_1} is not straightforward.

\begin{theo} For any $\epsilon>0$, $M>0$ and for any ${\ve
\alpha}=(\alpha_1,\alpha_2,\ldots,\alpha_{N-1},1)$, $0\le\alpha_1\le\alpha_2\le\ldots\le\alpha_{N-1}\le 1$, there exists
an integer vector ${\ve a}=(a_1,a_2,\ldots, a_N)$ with  $0<a_1<a_2< \ldots< a_N$ and $\gcd({\ve a})=1$  such that
\be || {\ve \alpha}-\frac{1}{a_N}{\ve a}||_\infty<\epsilon
\,\label{Density2}\ee
and
\be \frac{f_N({\ve a})}{s({\ve a})}> M \,. \label{Sharpness2}\ee

\label{only_asymptotic}
\end{theo}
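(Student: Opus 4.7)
The plan is to construct, for every $\epsilon>0$ and $M>0$, an explicit vector $\ve a$ arbitrarily close in direction to $\ve\alpha$ whose Frobenius number is enlarged by a Brauer-type trick: I will arrange $\gcd(a_1,\dots,a_{N-1})=p$ for a large prime $p$ coprime to $a_N$, which forces $f_N(\ve a)$ to be at least of order $p\cdot a_N$.

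Fix an exponent $c$ with $1/(N-1)<c<1$ (for instance $c=3/4$, uniform for $N\ge 3$). For a large parameter $T$, let $p$ be a prime in $[T^{c},2T^{c}]$ (available by Bertrand's postulate). Writing $k$ for the number of indices $i<N$ with $\alpha_i=0$, I would set $a_i=ip$ for $1\le i\le k$, take $a_i$ for $k<i<N$ to be a multiple of $p$ within distance $p$ of $\alpha_iT$, and let $a_N$ be an integer within distance $1$ of $T$ and coprime to $p$. Small adjustments of order $O(p)$ enforce the strict chain $a_1<a_2<\dots<a_N$, the identity $\gcd(a_1/p,\dots,a_{N-1}/p)=1$ (and hence $\gcd(a_1,\dots,a_{N-1})=p$), and $\gcd(\ve a)=1$. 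Since $|a_i-\alpha_i a_N|\ll T^{c}$ while $a_N\sim T$, one obtains $\|\ve\alpha-\ve a/a_N\|_\infty\ll T^{c-1}<\epsilon$ for $T$ large, verifying \eqref{Density2}.

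For the Frobenius lower bound I would consider the integer $n=(p-1)a_N-a_1$. In any representation $n=\sum_i c_i a_i$ with $c_i\in\Z_{\ge 0}$, reduction modulo $p$ (using $p\mid a_i$ for $i<N$) forces $c_N a_N\equiv -a_N\pmod p$, so $c_N\ge p-1$; but then $\sum_{i<N}c_i a_i=n-c_N a_N\le -a_1<0$, a contradiction. Hence $g_N(\ve a)\ge (p-1)a_N-a_1$ and $f_N(\ve a)\ge p\,a_N-a_1\gg_N pT$. A short case analysis of $s(\ve a)$, separating whether some $\alpha_i$ is positive for $i<N$ or not, yields $s(\ve a)\ll_{N,\ve\alpha}T^{1+1/(N-1)}$ uniformly, so
\[
\frac{f_N(\ve a)}{s(\ve a)}\gg_{N,\ve\alpha}p\,T^{-1/(N-1)}\ge T^{c-1/(N-1)}\longrightarrow\infty\quad(T\to\infty),
\]
which exceeds $M$ for $T$ sufficiently large, establishing \eqref{Sharpness2}.

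The main technical obstacle is the bookkeeping needed to arrange divisibility by $p$, strict monotonicity, and both gcd conditions simultaneously, especially when several of the $\alpha_i$ coincide or vanish. This is handled cleanly by reserving the initial indices (those with $\alpha_i=0$) for the distinct multiples $p,2p,\dots,kp$ and exploiting the $O(p)$ slack in rounding the remaining coordinates to multiples of $p$; in each case the slack is asymptotically negligible relative to $T$ and does not affect the approximation or the Frobenius estimate.
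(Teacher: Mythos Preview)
Your argument is correct and genuinely different from the paper's. The paper proceeds geometrically: it invokes Kannan's identity $f_N({\ve a})=\mu(S_{\ve a}(1),\Lambda_{\ve a}(1))$, chooses a rational lattice $L_M\subset V_{\ve\alpha}$ of determinant~$1$ with arbitrarily large covering radius, then uses the density Lemma~4.1 (which rests on Schinzel's polynomial gcd theorem) to realise $L_M$ as a limit of the rescaled lattices $\Gamma_{{\ve a}(t)}$, and finally appeals to Gruber's continuity of inhomogeneous minima to push the largeness of $\mu(B^{N-1}_1,L_M)$ through to $f_N({\ve a}(t))/s({\ve a}(t))$. Your route is purely arithmetic: the Brauer-type observation that $\gcd(a_1,\dots,a_{N-1})=p$ with $p\nmid a_N$ forces $g_N({\ve a})\ge (p-1)a_N-a_1$ replaces the whole geometric machinery, and the approximation to~$\ve\alpha$ is handled by elementary rounding. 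This is more self-contained (no Kannan formula, no Schinzel, no continuity lemma), explicitly constructive, and even yields a quantitative growth rate $f_N/s\gg_{N,\ve\alpha} T^{c-1/(N-1)}$. The paper's approach, on the other hand, fits naturally into its geometry-of-numbers framework and in principle shows more: one can prescribe the \emph{similarity class} of $\Lambda_{\ve a}$ near any target, not merely force the covering radius to be large.

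A couple of minor points worth tightening in a write-up. First, the ``bookkeeping'' for strict monotonicity and $\gcd(a_1/p,\dots,a_{N-1}/p)=1$ deserves one explicit sentence per case (e.g., when several $\alpha_i$ coincide, take the corresponding $a_i$ to be \emph{consecutive} multiples of $p$; when $k=0$, choose two of the $a_i/p$ to be consecutive integers). Second, the inequality $a_k=kp<a_{k+1}\approx\alpha_{k+1}T$ requires $T$ large depending on $\alpha_{k+1}$, so the threshold for $T$ depends on $\ve\alpha$ as well as on $\epsilon,M$; this is harmless for the statement but should be said. None of this affects the validity of the argument.
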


The paper is organized as follows. In Section 2 we combine Kannan's formula for $f_N({\ve a})$ with Jarnik's inequalities in order to reformulate the problem via Minkowski's successive minima.  Section 3 is devoted to Schmidt's results on the distribution of sublattices of $\Z^n$ on which our work heavily relies. For the proof of Theorem \ref{only_asymptotic} we need a density lemma which will be presented in Section 4. In the subsequent sections we give the proofs of our main results.

\section{Frobenius number and lattices}
\label{FL}
Following the geometric approach developed in Kannan \cite{Kannan} and Kannan and Lovasz
\cite{Kannan-Lovasz}, we will make use of tools from the geometry of
numbers.

By lattice we understand a discrete submodule $L$ of a finite--dimensional Euclidean space.
Recall that a
family of sets in
$\R^{N-1}$ is a {\em covering} if their union equals $\R^{N-1}$.
Given a 
set $S$ and a lattice $L$, we say that $L$ is a {\em covering
lattice} for
$S$ 
if the family $\{S+{\ve l}: {\ve l}\in L\}$
%
is a covering. 
The {\em inhomogeneous minimum} of 
the set $S$ with respect to
the lattice $L$ is the quantity
\bea \mu(S,L)= \inf\{\sigma>0: L \,\,\text{is a covering lattice
of}\,\, \sigma S\} \eea
and the quantity
\bea \mu_0(S)=\inf\{\mu(S,L): \det L =1\}\, \eea
is called the {\em (absolute) inhomogeneous minimum} of $S$. If $S$ is
bounded and has inner points,
then $\mu_0(S)$ does not vanish and is finite (see Gruber and Lekkerkerker \cite{GrLek},
Chapter 3).
The quantity $\mu_0(S)$ is closely related to the, perhaps better known, {\em covering
constant} $\Gamma(S)$ of the set $S$, where $\Gamma(S)= \sup\{\det(L): L \,\,\text{is a covering lattice of}\,\,
S\}$.
Indeed, by Gruber and Lekkerkerker \cite[p.~230]{GrLek} we have
$\mu_0(S)=\Gamma(S)^{-1/(N-1)}$.

Depending on the vector ${\ve a}\in\Z^N_{>0}$ we define the following $S_{\ve a}$ and lattice  $L_{\ve a}$ by
\begin{equation*}
\begin{split}
S_{\ve a}  &=\left\{ {\ve x} \in\R_{\ge 0}^{N-1}: \,
\sum_{i=1}^{N-1}a_i\, x_i \le 1\right\},\\
L_{\ve a}  &=\left\{{\ve x}\in \Z^{N-1}:\,
\sum_{i=1}^{N-1}a_i x_i \equiv 0 \mod a_N\right\}.
\end{split}
\end{equation*}
Kannan \cite[Theorem 2.5]{Kannan} proved that
\be f_N({\ve a})=\mu(S_{\ve a}, L_{\ve a}),
\label{result_of_Kannan}\ee
which provides a starting point for geometric investigations of the Frobenius number.
To this end we
 define the hyperplane lattice $\Lambda_{\ve a}(t)$ in $\R^N$ as
\bea
\Lambda_{\ve a}(t)=\{{\ve x}\in \Z^N: \langle {\ve a}, {\ve x}\rangle = t\}\,.
\eea
Let $V_{\ve a}(t)={\rm aff}\, \Lambda_{\ve a}(t)$ and $S_{\ve a}(t)$ be the $(N-1)$--dimensional simplex $V_{\ve a}(t)\cap \R_{\ge 0}^N$.
For convenience we will also use the notation $V_{\ve a}=V_{\ve a}(0)$ and $ \Lambda_{\ve a}= \Lambda_{\ve a}(0)$.

Furthermore, let $\pi(\cdot)$ denote the orthogonal projection onto coordinate hyperplane corresponding to the variables $x_1,\ldots,x_{N-1}$.
Then clearly $S_{\ve a}=\pi(S_{\ve a}(1))$, $L_{\ve a}=\pi(\Lambda_{\ve a}(0))$ and, since inhomogeneous minima are independent with respect to regular
affine transformations,
we can write \eqref{result_of_Kannan} as
\be f_N({\ve a})=\mu(S_{\ve a}(1), \Lambda_{\ve a}(1))\,.
\label{projection_of_Kannan}\ee
Here and through the rest of the paper we consider $V_{\ve a}(t)$ as a usual $(N-1)$--dimensional Euclidean space.

By a standard calculation (see, e.g., Fukshansky and Robins \cite[(19)]{Lenny}) the inradius of the simplex $S_{\ve a}(t)$ is given by
\bea
r_{\ve a}(t)=\frac{t||{\ve a}||}{\sum_{i=1}^N ||{\ve a}[i]||a_i}\,.
\eea
Denoting by  $B_r^N$  the ball of radius $r$ in $\R^N$ we have
 by (\ref{projection_of_Kannan})
\bea
f_N({\ve a})\le \mu(B^N_{r_{\ve a}(1)}\cap V_{\ve a}, \Lambda_{\ve a}).
\eea
Observe that 
$\mu(S, tL)=t\mu(S, L)$ and $\mu(tS, L)=t^{-1}\mu(S,
L)$.
Thus
\be
f_N({\ve a})\le \frac{||{\ve a}||^{1/(N-1)}}{r_{\ve a}(1)}\mu(B^N_{1}\cap V_{\ve a}, \Gamma_{\ve a})\,,
\label{f_N_mu}
\ee
where the lattice $\Gamma_{\ve a}=||{\ve a}||^{-1/(N-1)}\Lambda_{\ve a}$ has determinant $1$. In order to estimate $\mu(B^N_{1}\cap V_{\ve a}, \Gamma_{\ve a})$ we need Minkowski's successive minima, which for a $o$-symmetric convex set $K$ and a lattice $\Lambda$ defined by (see \cite[pp.~375]{peterbible})
\begin{equation*}
   \lambda_i(K,\Lambda)=\inf\{\lambda>0 : \dim(\lambda\,K\cap\Lambda)\geq i\},\,\quad  1\leq i\leq \dim\Lambda.
\end{equation*}

Let $\lambda_i=\lambda_i(B^N_{1}\cap V_{\ve a}, \Gamma_{\ve a})$ be the $i$-th successive minimum of the ball $B^N_{1}\cap V_{\ve a}$
with respect to the lattice $\Gamma_{\ve a}$.

By Jarnik's inequalities (see, e.g., Gruber and Lekkerkerker \cite[pp.~99]{GrLek}), we have
\be
\frac{1}{2}\lambda_{N-1}\le \mu(B^N_{1}\cap V_{\ve a}, \Gamma_{\ve a})\le \frac{N-1}{2} \lambda_{N-1}\,.
\label{mu_lambda}
\ee
Thus, for a fixed dimension $N$ the inhomogeneous minimum is essentially equal to the last successive minimum. By (\ref{f_N_mu}) and the right--hand side of (\ref{mu_lambda}) we obtain the inequality
\be
\frac{2f_N({\ve a})}{(N-1)\,s({\ve a})}=\frac{2r_{\ve a}(1)}{||{\ve a}||^{1/(N-1)}}f_N({\ve a})\le\lambda_{N-1}\,.
\label{f_N_lambda}
\ee
The latter expression explains the geometric meaning of the quantity $s({\ve a})^{-1}$. This is the normalized radius of a ball inscribed into the simplex $S_{\ve a}(1)$.

\section{Distribution of sublattices of $\Z^m$}

In this section we will recall several results due to W. Schmidt \cite{Schmidt} on the distribution of integer lattices.
Two lattices $L$, $L'$
are similar if there is a linear bijection $\phi: L\rightarrow L'$ such that for some fixed $c>0$ we have $||\phi({\ve x})||=c||{\ve x}||$.
Let ${\tilde O}_n$ be the group of matrices $K=({\ve k}_1, \ldots, {\ve k}_n)\in GL_n(\R)$ whose columns ${\ve k}_1, \ldots, {\ve k}_n$
have $||{\ve k}_1||= \cdots= ||{\ve k}_n||\neq 0$ and inner products $\langle {\ve k}_i, {\ve k}_j \rangle=0$ for $i\neq j$.
When $X=({\ve x}_1, \ldots, {\ve x}_n)\in GL_n(\R)$, we may uniquely write the matrix $X$ in the form
\be
X=KZ\,,
\label{Schmidt1.2}
\ee
where $K\in {{\tilde O}_n}$ and
\be
Z=\left ( \begin{array}{llll} 1 & x_{12} & \cdots & x_{1n}\\
                              0 & y_2 & \cdots & x_{2n}\\
                              \vdots\\
                              0 & 0 & \cdots & y_n
                         \end{array}\right )
\label{H_matrix}\ee
with $y_2,\ldots,y_n>0$. The matrices $Z$ as in (\ref{H_matrix}) form the generalized upper half--plane $\Hp=\Hp_n$. For $Z\in\Hp$ and
$M\in GL_n(\R)$, we may write $ZM$ in the form (\ref{Schmidt1.2}), that is we uniquely have $ZM=KZ_M$ with $K\in{\tilde O}_n$ and $Z_M\in \Hp$.
Thus $GL_n(\R)$ acts on $\Hp$; to $M$ corresponds the map $Z\mapsto Z_M$. In particular, $GL_n(\Z)$, as a subgroup of $GL_n(\R)$, acts on $\Hp$.
We will denote by $\F$ a fundamental domain for the action of $GL_n(\Z)$ on $\Hp$. We will also write $\mu$
for the $GL_n(\R)$ invariant measure on $\Hp$ with $\mu(\F)=1$.


Suppose now that $1<n\le m$. There is a map (see p. 38 of Schmidt \cite{Schmidt} for detail) from lattices of rank $n$ in $\R^m$ onto the set $\Hp/GL_n(\Z)$ of orbits of $GL_n(\Z)$ in $\Hp$. The lattices $L$, $L'$ are similar precisely if they have the same image in $\Hp/GL_n(\Z)$, hence the same image in $\F$. Similarity classes of lattices are parametrized by the elements of a fundamental domain $\F$.

A subset $\D\subset \Hp$ is called {\em lean} if $\D$ is contained in some fundamental domain $\F$. For $a>0$, $b>0$, let $\Hp(a,b)$
consists of $Z\in\Hp$ (in the form (\ref{H_matrix})) with
\bea
y_{i+1}\ge a y_i\,,\;\;\;1\le i <n, \quad
|x_{ij}|\le b y_i\,,\;\;\;1\le i<j\le n.
\eea
Here we assume $y_1=1$.

Recall that the Frobenius number $g_N({\ve a})$ is well-defined only for integer vectors ${\ve a}=(a_1, a_2, \ldots,a_N)$ with $\gcd(a_1, a_2, \ldots, a_N)=1$. The vectors ${\ve a}$ with this property are called {\em primitive}.
More generally, a lattice $L\subset \Z^m$ is {\em primitive} if $L=\spn_{\R}(L)\cap \Z^m$.
Clearly, there is one-to-one correspondence between primitive vectors ${\ve b}\in\Z^n$ and the primitive $(n-1)$--dimensional sublattices
 $\Lambda_{\ve b}$. Note also that $\det \Lambda_{\ve b}=||{\ve b}||$.
Let $P(\D, T)$, where $\D$ is lean, be the number of primitive lattices $L\subset\Z^m$ with similarity class in $\D$ and determinant $\le T$.
\begin{theo}[Schmidt \protect{\cite[Theorem 2]{Schmidt}}]
Suppose $1<n<m$ and let $\D\subset\Hp(a,b)$ be lean and Jordan-measurable. Then, as $T\rightarrow\infty$,
\be
P(\D, T)\sim c_2(m,n) \mu(\D)T^m
\label{Schmidt_1.9}
\ee
with
\bea
c_2(m,n)=\frac{1}{m} {m \choose n} \frac{V_{m-n+1}\cdots V_m}{V_1 V_2 \cdots V_n}\cdot \frac{\zeta(2)\cdots\zeta(n)}{\zeta(m-n+1)\cdots\zeta(m)}\,.
\eea
Here $V_l$ is the volume of the unit ball in $\R^l$ and $\zeta(\cdot)$ is the Riemann zeta--function.
\label{Schmidt_th_2}
\end{theo}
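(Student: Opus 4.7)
Since the statement is cited from Schmidt, what follows is a sketch of how I would approach a proof from scratch, roughly along his lines. The plan is to translate $P(\D,T)$ into a count of integer $m\times n$ matrices modulo the right $GL_n(\Z)$-action, use the Iwasawa-type $X=KZ$ decomposition to separate the similarity class (encoded by $Z$) from the size and orientation ($K$ together with the overall scale), and then apply a Davenport-type volume count in $\R^{m\times n}$.

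The first step is parametrization: a primitive rank-$n$ sublattice $L\subset\Z^m$ is given by an integer basis matrix $X\in\Z^{m\times n}$, unique up to right multiplication by $GL_n(\Z)$, and in the decomposition $X=KZ$ (with the columns of $K$ orthogonal and of common length $\|k_1\|$, and $Z\in\Hp$) one has $\det L=\sqrt{\det(X^tX)}=\|k_1\|^n y_2\cdots y_n$, while the similarity class of $L$ is precisely the $GL_n(\Z)$-orbit of $Z$. Since $\D$ is lean, requiring $Z\in\D$ picks out a unique representative of each contributing class. The second step is to pass between primitive and general sublattices: I would first count $A(\D,T)$, the number of \emph{all} rank-$n$ sublattices $L\subset\Z^m$ with similarity class in $\D$ and $\det L\le T$, then recover $P(\D,T)$ by M\"obius inversion. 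Each such $L$ sits as a finite-index sublattice of a unique primitive $L'$ of the same rank, and writing $L=L'M$ with $M\in\Z^{n\times n}$ in Hermite normal form reduces the passage $A\leftrightarrow P$ to a Dirichlet-series identity whose Euler products produce precisely the ratio $\zeta(2)\cdots\zeta(n)/(\zeta(m-n+1)\cdots\zeta(m))$.

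For the third step, I would estimate $A(\D,T)$ as the volume of the region
\bea
R(\D,T)=\{X=KZ:\ Z\in\D,\ \|k_1\|^n y_2\cdots y_n\le T\}\subset\R^{m\times n}
\eea
via a Davenport-type counting-in-expanding-domains theorem, with Jordan-measurability of $\D$ controlling the boundary contribution. Changing variables $X\mapsto(K,Z,t)$ with $t=\|k_1\|$, the Jacobian splits as a product of powers of $t$ and the $y_i$; the orthogonal part of $K$ integrates out to yield the ball volumes $V_{m-n+1}\cdots V_m$ (one $V_l$ for each successive orthogonal column of length $t$ chosen in a space of dimension $l$), the normalization $\mu(\F)=1$ introduces $1/(V_1V_2\cdots V_n)$, and radial integration in $t$ up to the cutoff $t^n y_2\cdots y_n\le T$ yields $T^m$ together with the combinatorial factor $\tfrac{1}{m}\binom{m}{n}$. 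Combining this with step two gives exactly the stated constant $c_2(m,n)$.

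The main obstacle is the bookkeeping in step three: confirming that the Jacobian of $X=KZ$, the Haar measure on the rectangular analogue of ${\tilde O}_n\cong\R_{>0}\times O(n)$, and the radial integration combine to produce precisely the claimed mixture of $V_l$'s together with the prefactor $\tfrac{1}{m}\binom{m}{n}$. A secondary obstacle is obtaining a uniform error term in the Davenport count; the lean hypothesis and the boundedness of the parameters $a,b$ in $\Hp(a,b)$ are both essential so that one does not accumulate error from distant copies of the fundamental domain.
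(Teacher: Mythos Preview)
The paper does not prove this statement; it is quoted verbatim from Schmidt \cite[Theorem 2]{Schmidt} and used as a black box, so there is no proof in the paper to compare your sketch against. Your outline is a plausible reconstruction of Schmidt's strategy (Iwasawa-type decomposition, a Davenport-style volume count, and a M\"obius/Dirichlet-series passage between all and primitive sublattices), but since the present paper offers nothing beyond the citation, any detailed assessment of your sketch would have to be made against Schmidt's original article rather than against this one.
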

Thus, roughly speaking, the proportion of primitive lattices with similarity class in $\D$ is $\mu(\D)$.

Given a vector ${\ve u}=(u_1, u_2, \ldots, u_n)$ with $u_i\ge 1 \;(1\le i <n)$, the lattices $L$ with
\bea
\frac{\lambda_{i+1}(B_1^n\cap \spn_{\R}(L),L)}{\lambda_i(B_1^n\cap \spn_{\R}(L), L)}\ge u_i
\eea
form a set of similarity classes, which will be denoted by
$\D({\ve u})$.
\begin{theo}[Schmidt \protect{\cite[Theorme 5 (i)]{Schmidt}}]
The set $\D({\ve u})$ may be realized as a lean, Jordan--measurable subset of $\Hp$. We have
\be
\mu(\D({\ve u}))\ll_{m,n}\prod_{i=1}^{n-1} u_i^{-i(n-i)}\,.
\label{Th_5_i}
\ee
Here $\ll_{m,n}$ denotes the Vinogradov symbol with the constant depending on $m$ and $n$ only.
\label{Schmidt_th_5}
\end{theo}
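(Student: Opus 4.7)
The plan is to use standard reduction theory for $GL_n(\Z)$ acting on $\Hp$ to convert the successive-minima condition defining $\D({\ve u})$ into explicit inequalities on the diagonal coordinates $y_i$, and then to integrate the $GL_n(\R)$-invariant measure in these coordinates.

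First I would fix a Siegel-type fundamental domain $\F_0 \subset \Hp$ for $GL_n(\Z)$, such as
\[
\F_0 = \{Z \in \Hp : |x_{ij}|\le 1/2 \text{ for } i<j,\ y_{i+1}\ge c\, y_i \text{ for } 1\le i<n\},
\]
with a suitable $c = c(n)\in (0,1]$; the existence of such a domain is classical. A standard fact is that, for every $Z\in \F_0$, the columns of $Z$ form a basis of the associated rank-$n$ lattice $L_Z$ whose Gram--Schmidt lengths equal the $y_i$ and whose overall lengths satisfy $\|{\ve z}_i\|\asymp_n y_i$; moreover this basis realises the successive minima of $B_1^n\cap \spn_\R(L_Z)$ with respect to $L_Z$ up to $O_n(1)$. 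Hence
\[
\lambda_i(B_1^n\cap \spn_\R(L_Z),L_Z) \asymp_n y_i, \qquad 1\le i\le n,
\]
with $y_1=1$, and the inequalities $\lambda_{i+1}/\lambda_i \ge u_i$ become $y_{i+1}/y_i \ge c_n u_i$ for a constant $c_n>0$.

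This realises $\D({\ve u})$ as a subset of $\F_0$ (hence lean) which is Jordan-measurable, since it is cut out by finitely many polynomial inequalities in the real coordinates $(y_i,x_{ij})$ of $Z$. To estimate its measure I would invoke the explicit form of the $GL_n(\R)$-invariant measure, which in these coordinates reads
\[
d\mu = c_n \prod_{i=2}^n y_i^{-\alpha_i}\, dy_i \prod_{1\le i<j\le n} dx_{ij},
\]
with the exponents $\alpha_i$ determined by invariance. Inside $\F_0$ each $x_{ij}$-integration gives an $O_n(1)$ factor, so the problem reduces to estimating the $y$-integral over $\{y_{i+1}\ge c_n u_i y_i\}$. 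Substituting $z_i = y_{i+1}/y_i$, so that $y_k=\prod_{i<k}z_i$, each ratio $z_i$ appears in $y_{i+1},\ldots,y_n$ and thereby accumulates a total exponent in the integrand; arranging the bookkeeping shows that $z_i$ enters with the power producing $u_i^{-i(n-i)}$ after integration over $z_i\in[c_n u_i,\infty)$, and multiplying the $n-1$ independent estimates yields the claimed bound.

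The main obstacle I anticipate is the exponent bookkeeping: verifying that after the change of variables the total power of $z_i$ is indeed $-i(n-i)-1$. This is essentially forced by the compatibility of the Iwasawa decomposition with the standard flag, and $i(n-i)$ is the dimension of the Grassmannian of $i$-planes in $\R^n$, which is what the ratio $y_{i+1}/y_i$ controls. Confirming the reduction-theoretic equivalence between $y_i$ and $\lambda_i$ also requires care; it rests on the classical fact that inside $\F_0$ the Gram--Schmidt basis of $L_Z$ is simultaneously Minkowski-reduced up to constants depending only on $n$.
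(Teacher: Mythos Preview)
The paper does not prove this statement; it is quoted verbatim from Schmidt \cite[Theorem~5(i)]{Schmidt} and used as a black box. There is therefore no ``paper's own proof'' to compare against.

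That said, your sketch is essentially the argument Schmidt gives. He works inside a fixed fundamental domain of the type $\Hp(a,b)$ (note that in his coordinates the bound on the off-diagonal entries is $|x_{ij}|\le b\,y_i$, not an absolute constant, so your description of $\F_0$ is slightly off, though harmlessly so for the estimate), uses that reduction theory forces $\lambda_i\asymp_n y_i$ there, and then integrates the explicit invariant measure. The exponent bookkeeping you flag as the main obstacle is exactly the content of Schmidt's computation: in the ratio variables the invariant density contributes a factor $q_i^{-i(n-i)-1}$ for $q_i=y_{i+1}/y_i$, so integrating over $q_i\gg u_i$ yields the factor $u_i^{-i(n-i)}$. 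Your proposal is a faithful outline of Schmidt's proof, with the caveat that you leave the actual derivation of the invariant measure and the precise exponent unverified; filling that in is straightforward but not entirely trivial, and is done in detail in \cite{Schmidt}.
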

\begin{rem}
Note that the condition $\D({\ve u})\subset \Hp(a,b)$ of Theorem \ref{Schmidt_th_2} is also satisfied for some constants
$a=a(n)$ and $b=b(n)$. We refer the reader \cite[p.~58]{Schmidt} for further detail.
\end{rem}

\section{A density lemma}

\begin{lemma}
Let $L$ be a lattice with basis ${\ve b}_1,\ldots,{\ve b}_{N-1}$,
${\ve b}_i\in\mathbb Q^{N}$, $1\le i \le N-1$, and let ${\ve \alpha}=(\alpha_1,\ldots, \alpha_{N-1},1)\in\Q^{N}$ be a vector orthogonal to $L$.
Then there exists
a sequence ${\ve a}(t)=(a_1(t),\ldots,a_{N-1}(t),a_{N}(t))\in\Z^{N}$, $t=1,2,\ldots$,
such that $\gcd({\ve a}(t))=1$ and the following properties hold:
\begin{itemize}
\item[(i)]
The lattice $\Lambda_{{\ve
a}(t)}$ has a basis ${\ve b}_1(t),\ldots,{\ve b}_{N-1}(t)$
with
\be \frac{b_{ij}(t)}{d\,t}=b_{ij}+O\left(\frac{1}{t}\right)\,,\;\;\;
(1\le i,j\le {N})\,, \label{asympt_aij} \ee
where $d\in\mathbb N$ is such that $d\, b_{ij}\in\mathbb Z$, $1\le i\le N-1$ and $1\le j\le {N}$.
\item[(ii)] The last component of the vector ${\ve a}(t)$ satisfies
\be a_{N}(t)=\det(\pi(L))d^{N-1}t^{N-1}+O(t^{N-2}).\label{asympt_h} \ee
\item[(iii)] The sequence $(a_{N}(t))^{-1}{\ve
a}(t)$ converges to ${\ve \alpha}$. Indeed, \be \frac{a_i(t)}{a_{N}(t)}
=\alpha_i+O\left(\frac{1}{t}\right), 1\le i\le N-1. \label{asympt_alpha}\ee

%
%
%
%
\end{itemize}
\label{main_lemma}
\end{lemma}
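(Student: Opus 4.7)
I will build $\ve a(t)$ as the primitive integer vector orthogonal to a bounded integer perturbation of the lattice $dt\,L\subset\Z^N$. Pick $d\in\N$ with $d\ve b_i\in\Z^N$ for all $i$; since $\alpha_N=1$, the projection $\pi$ restricts to a linear isomorphism $V_{\ve\alpha}\to\R^{N-1}$, so $\det\pi(L)\neq 0$. For bounded integer perturbations $\ve u_1,\ldots,\ve u_{N-1}\in\Z^N$ (possibly depending on $t$) set
\[
\ve b_i(t):=dt\,\ve b_i+\ve u_i\in\Z^N,\qquad \Lambda(t):=\spn_\Z(\ve b_1(t),\ldots,\ve b_{N-1}(t))\subset\Z^N,
\]
and, granted that $\Lambda(t)$ is primitive in $\Z^N$, define $\ve a(t)\in\Z^N$ to be the unique primitive integer vector (sign chosen so $a_N(t)>0$) for which $\Lambda_{\ve a(t)}=\Lambda(t)$. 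Concretely $\ve a(t)$ is the generalized cross product of the rows of $M(t):=(\ve b_i(t))$, and property (i) is immediate from $b_{ij}(t)/(dt)=b_{ij}+u_{ij}/(dt)=b_{ij}+O(1/t)$.

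\textbf{Asymptotics by multilinearity.} Setting $a^*_j(t):=(-1)^{j+1}\det M_j(t)$ and expanding each row of $M(t)$ as $dt\,\ve b_i+\ve u_i$ by multilinearity of the determinant yields the polynomial identity
\[
\ve a^*(t)=(dt)^{N-1}\,\ve c\;+\;\sum_{k=1}^{N-1}(dt)^{N-1-k}\,\ve c_k,
\]
with $\ve c=\ve b_1\times\cdots\times\ve b_{N-1}$ and $\ve c_k\in\Q^N$ depending only on $\ve b_i,\ve u_i$ (and satisfying $d^{N-1-k}\ve c_k\in\Z^N$). Since each $\ve b_i\perp\ve\alpha$, the leading cross product $\ve c$ is parallel to $\ve\alpha$; evaluating its $N$-th coordinate gives $\ve c=\pm\det\pi(L)\,\ve\alpha$. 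When $\Lambda(t)$ is primitive we have $\ve a(t)=\ve a^*(t)$, and reading off the leading coefficient of the polynomial above gives $a_N(t)=\det\pi(L)\,d^{N-1}t^{N-1}+O(t^{N-2})$ and $a_i(t)=\alpha_i\det\pi(L)\,d^{N-1}t^{N-1}+O(t^{N-2})$, which are exactly (ii) and (iii).

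\textbf{The hard part: primitivity.} The only delicate step is ensuring that $\Lambda(t)$ is primitive in $\Z^N$, equivalently $\gcd_j a^*_j(t)=1$, for every large $t$. Bad perturbations can fail dramatically: for $N=3$, $\ve b_1=(1,0,-1)$, $\ve b_2=(0,1,-1)$ and $\ve u_i=\ve e_i$ a direct computation gives $\gcd_j a^*_j(t)=t+1$, which cuts the primitive vector down by a full factor of $t$ and destroys (ii); whereas the nearby $\ve u_1=\ve e_1,\ \ve u_2=\ve e_3$ yields $\gcd_j a^*_j(t)=1$ for every $t\ge 1$ and the explicit primitive $\ve a(t)=(t^2,\,t^2-1,\,t^2+t)$. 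In general, for each prime $p$ the condition ``$p\mid a^*_j(t)$ for all $j$'' is a proper polynomial condition on the perturbation modulo $p$, and only the finitely many primes dividing the nonzero coefficients of the $a^*_j(\cdot)$ can systematically obstruct primitivity; a short sieve/CRT argument across these primes then supplies, for each $t$, an admissible bounded $(\ve u_1(t),\ldots,\ve u_{N-1}(t))$. This primitivity verification is the crux of the proof; everything else reduces to the multilinear algebra displayed above.
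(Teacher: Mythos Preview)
Your construction is exactly the one the paper uses: perturb the integer matrix $dt\,B$ by bounded integer rows, read off the normal vector as the vector of signed $(N-1)\times(N-1)$ minors, and obtain (i)--(iii) by multilinearity. The paper takes the specific diagonal perturbation $\ve u_i=t_i\ve e_i$ with free integer parameters $t_1,\dots,t_{N-1}$, which amounts to your matrix $M(t)$.

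The only substantive divergence is the primitivity step, and here your sketch has a genuine gap. Your sentence ``only the finitely many primes dividing the nonzero coefficients of the $a^*_j(\cdot)$ can systematically obstruct primitivity'' is not correct as stated: for a fixed bounded $\ve u$, the integers $a^*_j(t,\ve u)$ grow like $t^{N-1}$, so arbitrarily large primes can occur as common divisors for specific $t$, and a CRT over a \emph{fixed} finite set of primes cannot rule them out. What is true is that if the one-variable polynomials $a^*_1(t),\dots,a^*_N(t)\in\Z[t]$ (for a well-chosen $\ve u$) are coprime in $\Q[t]$, then a resultant-type identity forces $\gcd_j a^*_j(t)$ to divide a fixed nonzero integer, after which a congruence restriction on $t$ kills the remaining primes. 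But establishing that coprimality and carrying out the reduction uniformly is exactly the content of a nontrivial theorem, not a ``short sieve/CRT argument''. Likewise, the alternative route you hint at---choosing a $t$-dependent $\ve u(t)$ in a box of size independent of $t$---would require a genuine sieve estimate with the bad density summable over \emph{all} primes, and for small primes (e.g.\ $p=2,3$) the density of rank-deficient $(N-1)\times N$ matrices over $\F_p$ is far from negligible, so the naive union bound fails.

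The paper resolves this cleanly by invoking Schinzel's theorem on values of polynomials (\emph{A property of polynomials with an application to Siegel's Lemma}, Monatsh.\ Math.\ \textbf{137} (2002), 239--251): once one checks that the minors $M_1,\dots,M_N$ have no non-constant common factor as polynomials in $(t,t_1,\dots,t_{N-1})$, Schinzel's result supplies explicit integers $t_1^*,\dots,t_{N-1}^*$ and an arithmetic progression $\{at+b\}$ along which $\gcd\big(M_1(at+b,t_i^*),\dots,M_N(at+b,t_i^*)\big)=1$. Reparametrizing $t\mapsto at+b$ (which only rescales the constant $d$ in (i)--(iii)) then gives the sequence $\ve a(t)$. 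You should either cite this result or supply a complete argument of comparable strength; the rest of your proof is fine.
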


The result is a modified version of Theorem 1.2 of Aliev and Gruber \cite{Aliev-Gruber}, but in order to keep the paper self-contained as much as possible we give a short proof here.

\begin{proof}

%
%

Let us consider the matrices
\bea B=\left (
\begin{array}{ccccc}
b_{11} & b_{12}  & \ldots & b_{1\,N-1} &
b_{1\,N}\\
b_{21} & b_{22}  & \ldots & b_{2\,N-1} &
b_{2\,N}\\
\vdots & \vdots & & \vdots & \vdots\\
b_{N-1\,1} & b_{N-1\,2}  & \ldots & b_{N-1\,N-1} &
b_{N-1\,N}\\
\end{array}
\right ) \eea
and
\bea M=M(t,t_1,\ldots,t_{N-1})\eea\bea =\left (
\begin{array}{ccccc}
db_{11}t+t_1 & db_{12}t  & \ldots & db_{1\,N-1}t &
db_{1\,N}t\\
db_{21}t & db_{22}t+t_2  & \ldots & db_{2\,N-1}t &
db_{2\,N}t\\
\vdots & \vdots & & \vdots & \vdots\\
db_{N-1\,1}t & db_{N-1\,2}t  & \ldots & db_{N-1\,N-1}t+t_{N-1} &
db_{N-1\,N}t\\
\end{array}
\right ) \,,\eea
where $t, t_1, \ldots, t_{N-1}$ are variables.
%
%

Denote by $M_i=M_i(t,t_1,\ldots,t_{N-1})$ and $B_i$ the minors
obtained by omitting the $i$th column in $M$ or in $B$,
respectively.
Note that
\begin{equation}
\begin{split}
|B_{N}| & = |\det(b_{ij})|=\det(\pi(L)),\quad \alpha_i=\frac{|B_i|}{|B_{N}|},\text{ and } \\
M_i&=d^{N-1}B_it^{N-1}+ \text{polynomials in $t$ of degree less than $N-1$}.
\end{split}
\label{asympt_minors}
\end{equation}
Following the proof of Theorem 2 in Schinzel \cite{NewSL} we also observe that
$M_1,\ldots,M_{N}$ have no non--constant common factor. By \cite[Theorem 1]{NewSL} with $m=1$, $F=1$, and
$F_{1\nu}=M_\nu(t,t_1,\ldots,t_{N-1})$, $1\le \nu\le N$,
there exist integers $t^*_1,\ldots,t^*_{N-1}$ and an infinite
arithmetic progression ${\mathcal P}$ such that for $at+b\in{\mathcal
P}$
\bea\mbox{gcd}(M_1(at+b,t^*_1,\ldots,t^*_{N-1}), \ldots
,M_{N}(at+b,t^*_1,\ldots,t^*_{N-1}))=1 \,.\eea
For $t=1,2,\ldots$ we set
\bea {\ve a}(t)=(M_1(at+b,t^*_1,\ldots,t^*_{N-1}), \ldots
,(-1)^{N-1}M_{N}(at+b,t^*_1,\ldots,t^*_{N-1})).\eea
Then the basis ${\ve b}_1(t),\ldots,{\ve b}_{N-1}(t)$ for $L_{{\ve
a}(t)}$ satisfying the statement of Lemma \ref{main_lemma} is
given by the rows of the matrix  $M(t,t^*_1,\ldots,t^*_{N-1})$.
The properties \eqref{asympt_minors} of minors $M_i$,
$B_i$ imply the properties (\ref{asympt_aij})--(\ref{asympt_alpha})
of the sequence ${\ve a}(t)$.

\end{proof}

\section{Proof of Theorem \ref{main}}

We consider the sequence of discrete random variables
$X_T: G(N,T)\rightarrow \R_{\ge 0}$ defined as
\bea
X_T({\ve a})= \frac{f_N({\ve a})}{s({\ve a})}\,.
\eea
Recall that the {\em cumulative distribution function} (CDF) $F_T$ of $X_T$ is defined for $t\in\R_{\ge 0}$ as
\bea
F_T(t)=\prob_{N,T}(X_T\le t\,)\,.
\eea

For a real number $u\ge 1$, let ${\ve v}_i(u)=(u_1, u_2, \ldots, u_{N-2})$ be the vector with $u_i=u$ and $u_j=1$ for all $j\neq i$. Define the set $\D(u)$ of similarity classes as (cf.~Section 3)
\bea
\D(u)= \bigcup_{i=1}^{N-2}\D({\ve v}_i(u))\,.
\eea
By (\ref{Th_5_i}) the measure of this set satisfies
\be
\mu(\D(u))\ll_N \frac{1}{{u}^{N-2}}\,.
\label{measure_of_union}
\ee
%
%
Let $Y_T: G(N,T)\rightarrow \R_{>0}$ be the sequence of random variables defined as
\bea
Y_T({\ve a})= \sup\{v\in \R_{> 0}: \Lambda_{\ve a}\in \D(c_1v^{2/(N-2)})\}\,,
\eea
where the constant $c_1=c_1(N)$ is given by
\bea
c_1= {V_{N-1}^{\frac{2}{(N-1)(N-2)}}}/{(N-1)^{2/(N-2)}}\,.
\eea
Since the set $\D(1)$ contains all similarity classes we have  for all ${\ve a}\in G(N,T)$
\be Y_T({\ve a})\ge c_1^{-(N-2)/2}.\label{l_bound}\ee
Let now $\Gamma\subset\R^N$ be a lattice of rank $N-1$ and determinant $1$, and  let $\lambda_i:=\lambda_{i}(B_1^N\cap \spn_{\R}(L),L)$, $1\le i\le N-1$. We need the following simple observation
\begin{lemma}
Let
$\lambda_{N-1}>\lambda>0$. Then there exists an index $i\in\{1,\dots,N-2\}$ with
\bea
\frac{\lambda_{i+1}}{\lambda_i}> c_2(N) \lambda^{2/(N-2)}\,,
\eea
where
$c_2(N)=2^{-\frac{2}{N-2}}V_{N-1}^{\frac{2}{(N-1)(N-2)}}$.
\label{lambda}
\end{lemma}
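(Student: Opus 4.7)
The plan is to argue by contradiction. Suppose that every consecutive ratio satisfies
\[
\frac{\lambda_{i+1}}{\lambda_i}\le \rho:=c_2(N)\lambda^{2/(N-2)}\,,\qquad 1\le i\le N-2,
\]
and deduce from Minkowski's second theorem that $\lambda_{N-1}\le \lambda$, contradicting the hypothesis $\lambda_{N-1}>\lambda$.

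The first step is to iterate the ratio bound downwards: for every $i$ one has $\lambda_{N-1}\le \rho^{N-1-i}\lambda_i$, i.e.\ $\lambda_i\ge \lambda_{N-1}\rho^{i-(N-1)}$. Multiplying these $N-1$ inequalities gives
\[
\prod_{i=1}^{N-1}\lambda_i\ge \frac{\lambda_{N-1}^{N-1}}{\rho^{(N-1)(N-2)/2}}\,.
\]
The second step is to apply Minkowski's second theorem to the lattice $\Gamma$ of determinant $1$ and the $o$-symmetric convex body $B_1^N\cap \spn_\R(\Gamma)$, which is an $(N-1)$-dimensional Euclidean unit ball of volume $V_{N-1}$. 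This yields
\[
\prod_{i=1}^{N-1}\lambda_i\le \frac{2^{N-1}}{V_{N-1}}\,.
\]
Combining the two inequalities gives $\lambda_{N-1}^{N-1}\le 2^{N-1}\rho^{(N-1)(N-2)/2}/V_{N-1}$, hence
\[
\lambda_{N-1}\le \frac{2\,\rho^{(N-2)/2}}{V_{N-1}^{1/(N-1)}}\,.
\]

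The last step is a direct substitution. With the choice $c_2(N)=2^{-2/(N-2)}V_{N-1}^{2/((N-1)(N-2))}$ one computes
\[
\rho^{(N-2)/2}=c_2(N)^{(N-2)/2}\lambda=\tfrac12\,V_{N-1}^{1/(N-1)}\,\lambda,
\]
so that the previous bound collapses to $\lambda_{N-1}\le \lambda$, contradicting $\lambda_{N-1}>\lambda$. Therefore some ratio $\lambda_{i+1}/\lambda_i$ must exceed $c_2(N)\lambda^{2/(N-2)}$, as required.

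There is no serious obstacle: the whole argument is a one-line calculation once the constant $c_2(N)$ is recognised as precisely what makes the Minkowski upper bound on $\prod_i\lambda_i$ match the lower bound forced by uniformly small ratios. The only mild point to watch is using Minkowski's second theorem for a full-rank lattice in its own span (the $(N-1)$-dimensional subspace $\spn_\R(\Gamma)$), which is legitimate since $\Gamma$ has rank $N-1$ and $B_1^N\cap\spn_\R(\Gamma)$ is a unit ball in that subspace with volume $V_{N-1}$.
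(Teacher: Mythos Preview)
Your proof is correct and follows exactly the same approach as the paper: assume all ratios are bounded by $\rho=c_2(N)\lambda^{2/(N-2)}$, combine the resulting lower bound on $\prod_i\lambda_i$ with Minkowski's second theorem $\prod_i\lambda_i\le 2^{N-1}/V_{N-1}$, and conclude $\lambda_{N-1}\le\lambda$, a contradiction. The only cosmetic difference is that you write out the product step explicitly, whereas the paper jumps directly to the final bound on $\lambda_{N-1}$.
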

\begin{proof}
Suppose the opposite, i.e.,
\begin{equation*}
\frac{\lambda_{i+1}}{\lambda_i}\le c_2(N) \lambda^{2/(N-2)},
\end{equation*}
for all $1\le i\le N-2$. Then, $\lambda_{N-1}\leq (c_2(N)\lambda^{2/(N-2)})^{N-1-i}\lambda_{i}$, and
by Minkowski's second fundamental theorem (cf., e.g., \cite[pp.~376]{peterbible})
\be
\lambda_1 \lambda_2 \cdots \lambda_{N-1} \le \frac{2^{N-1}}{V_{N-1}}\,,
\label{2nd_MT}
\ee
we get the contradiction
\bea
\lambda_{N-1}\le {(c_2(N) \lambda^{2/(N-2)})}^{\frac{(N-2)}{2}}  \frac{2}{V_{N-1}^{1/(N-1)}}=\lambda\,.
\eea
\end{proof}
We remark, that  \eqref{2nd_MT} can be slightly improved by applying
Minkowski's second theorem for balls. However we do not go further in this direction.

Let now $\tilde F_T$ be the CDF of the random variable $Y_T$.
\begin{lemma}
For any $T\ge 1$ and $t\ge 0$ we have
\bea
{\tilde F_T}(t)\le F_T(t).
\eea
\label{cdfs}
\end{lemma}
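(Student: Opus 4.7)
The plan is to deduce the CDF comparison from the stronger pointwise bound $X_T(\ve a) \le Y_T(\ve a)$ for every $\ve a \in G(N,T)$. Once this is in hand the inclusion of events $\{Y_T \le t\} \subseteq \{X_T \le t\}$ gives $\tilde F_T(t) = \prob_{N,T}(Y_T\le t) \le \prob_{N,T}(X_T\le t) = F_T(t)$ directly.

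To establish the pointwise inequality, I would first repackage $Y_T(\ve a)$ in terms of successive-minima ratios. Writing $\lambda_i = \lambda_i(B_1^N\cap V_{\ve a},\Gamma_{\ve a})$ and $R(\ve a) = \max_{1\le i\le N-2}\lambda_{i+1}/\lambda_i$, the definition of $\D(\ve v_i(u))$ together with $\D(u) = \bigcup_{i=1}^{N-2}\D(\ve v_i(u))$ makes the condition $\Lambda_{\ve a} \in \D(u)$ equivalent to $R(\ve a)\ge u$ (for $u\ge 1$; the range $u\le 1$ is trivial and is precisely what is reflected in (\ref{l_bound})). Substituting $u = c_1 v^{2/(N-2)}$ into the definition of $Y_T$ and using $R(\ve a)\ge 1$ then yields the clean identity
\[
Y_T(\ve a) = \bigl(R(\ve a)/c_1\bigr)^{(N-2)/2}.
\]

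The second step is to bound $\lambda_{N-1}$ by $R(\ve a)$ through Lemma \ref{lambda}: letting $\lambda \nearrow \lambda_{N-1}$ in the statement of that lemma forces some ratio $\lambda_{i+1}/\lambda_i \ge c_2(N)\lambda_{N-1}^{2/(N-2)}$, hence $\lambda_{N-1}\le (R(\ve a)/c_2(N))^{(N-2)/2}$. Inserting this into the Jarnik-type bound (\ref{f_N_lambda}) gives
\[
X_T(\ve a) \le \tfrac{N-1}{2}\,\lambda_{N-1} \le \tfrac{N-1}{2}\,\bigl(R(\ve a)/c_2(N)\bigr)^{(N-2)/2}.
\]
A direct computation from the explicit values of $c_1$ and $c_2(N)$ shows $c_1/c_2(N) = \bigl(2/(N-1)\bigr)^{2/(N-2)}$, so the prefactor $(N-1)/2$ absorbs exactly into the change of constant from $c_2(N)$ to $c_1$, and one lands on $X_T(\ve a) \le (R(\ve a)/c_1)^{(N-2)/2} = Y_T(\ve a)$, as desired.

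I do not expect a real obstacle: the constants $c_1$ and $c_2(N)$ are manifestly tailored so that this numerical cancellation is exact, and the only mild subtlety—the corner case $R(\ve a)=1$ with all successive minima equal—is absorbed by the floor value $c_1^{-(N-2)/2}$ already built into (\ref{l_bound}).
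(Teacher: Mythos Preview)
Your argument is correct and follows essentially the same route as the paper: both combine the bound \eqref{f_N_lambda} with Lemma~\ref{lambda} and the exact relation $c_1/c_2(N)=(2/(N-1))^{2/(N-2)}$ to compare $X_T$ with $Y_T$. The only cosmetic difference is packaging---the paper fixes $t$ and shows the implication $X_T({\ve a})>t\Rightarrow Y_T({\ve a})>t$, whereas you first rewrite $Y_T({\ve a})=(R({\ve a})/c_1)^{(N-2)/2}$ in closed form and then prove the equivalent pointwise bound $X_T({\ve a})\le Y_T({\ve a})$.
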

\begin{proof}
Let $\Gamma=\Gamma_{\ve a}$. By (\ref{f_N_lambda}), we have
\bea
\frac{f_N({\ve a})}{s(\ve a)}\le\frac{(N-1)}{2} \lambda_{N-1} \,.
\eea
Hence, if for some $t$ holds
\bea
X_T({\ve a})=\frac{f_N({\ve a})}{s({\ve a})}> t
\eea
then clearly $\lambda_{N-1}>\frac{2t}{(N-1)}$.
By Lemma \ref{lambda}, applied with $\lambda=\frac{2t}{(N-1)}$, we get
\bea
\frac{\lambda_{i+1}}{\lambda_i}>c_1(N) t^{2/(N-2)}\,.
\eea
Consequently, the lattice $\Gamma_{\ve a}$ belongs to a similarity class in $\D(c_1t^{2/(N-2)})$, so that $Y_T({\ve a})> t$.
Therefore,
\bea
\begin{split}
\prob_{N,T}(X_T\le t\,) & =1 - \frac{\#\{{\ve a}\in G(N,T): f_N({\ve a})/s({\ve a})>t\}}{\#G(N,T)} \\
&\ge 1- \frac{\#\{{\ve a}\in G(N,T): Y_T({\ve a})> t\}}{\#G(N,T)}=\prob(Y_T\le t\,).
\end{split}
\eea
\end{proof}

By Schmidt \cite[Theorem2]{Schmidt}, the number of primitive integer vectors ${\ve a}\in \Z^N$  with  $||{\ve a}||\le T$  and which lie on coordinate hyperplanes is essentially equal to $T^{N-1}$, so that the proportion of such vectors tends to zero as $T\rightarrow\infty$.
Thus by Lemma \ref{cdfs} and  Theorem \ref{Schmidt_th_2} we finally obtain:
\bea
\begin{split}
\prob_{N,T}(f_N({\ve a})/s({\ve a})>t) & = 1- F_T(t) \le 1-{\tilde F}_T(t) \\
& =\frac{\#\{{\ve a}\in G(N,T): Y_T({\ve a})> t\}}{\#G(N,T)}
\\ &\ll_N \mu(\D(c_1t^\frac{2}{N-2}))\ll_N t^{-2}.
\end{split}
\eea
This proves the theorem.



\section{Proof of Corollary \ref{main_coro}}

Observe that for all ${\ve a}$ holds $f_N({\ve a})> g_N({\ve a})$. Therefore, it is enough to prove the inequality
\bea
\prob_{\infty,0}(f_N({\ve a})/T^{1+1/(N-1)}>t)\ll_N t^{-2}\,.
\eea
By \cite[Theorem 2]{Schmidt}, we have $\# G(N, T/\sqrt{N})\gg_N \# G(N, T)$ and thus
\bea
\begin{split}
\prob_{\infty,0}(f_N({\ve a})/T^{1+1/(N-1)}>t)&\ll_N \frac{\#\{{\ve a}\in G(N, T): f_N({\ve a})/T^{1+1/(N-1)}>t\}}{\# G(N, T/\sqrt{N})}\\
&\ll_N \prob_{N,T}(f_N({\ve a})/T^{1+1/(N-1)}>t)\,.
\end{split}
\eea
Noting that $s({\ve a})\ll_N T^{1+1/(N-1)}$ for ${\ve a}\in G(N,T)$, we get
\bea
\prob_{N,T}(f_N({\ve a})/T^{1+1/(N-1)}>t)\le \prob_{N,T}(f_N({\ve a})/s({\ve a})>\delta_N t)\,
\eea
with some positive constant $\delta_N$ which depends on $N$ only. Finally, by Theorem \ref{main}, we obtain
the desired inequality:
\bea
\prob_{\infty,0}(f_N({\ve a})/T^{1+1/(N-1)}>t)\ll_N \prob_{N,T}(f_N({\ve a})/s({\ve a})>\delta_N t)\ll_N t^{-2}\,.
\eea
%

\section{Proof of Theorem \ref{Asymptotic_bound_1}}

We will keep the notation from the proof of Theorem \ref{main}. Let also $E(\cdot)$ denote the mathematical expectation.
Since for any nonnegative real-valued random variable $X$
\be
E(X)=\int_0^\infty (1-F_X(t))dt\,,
\label{E_F}
\ee
Lemma \ref{cdfs} implies that $E(X_T)\le E(Y_T)$
and, consequently,
\be
\sup_{T}E(X_T)\le \sup_{T}E(Y_T)\,.
\label{E_limsup}
\ee
Next,  by Theorem \ref{Schmidt_th_2} we also have
\bea
1-{\tilde F}_T(t)=\frac{\#\{{\ve a}\in G(N,T): Y_T({\ve a})> t\}}{\#G(N,T)}
\ll_N \mu(\D(c_1t^\frac{2}{N-2}))\ll_N t^{-2}.
\eea
Thus by (\ref{E_F}), (\ref{E_limsup}) and observation (\ref{l_bound}), we obtain
\bea
\sup_{T}E(X_T)\ll_N \int_{c_1^{-(N-2)/2}}^\infty t^{-2}\, dt \ll_N 1,
\eea
which  proves the theorem.



\section{Proof of Theorem \ref{only_asymptotic}}

The proof is based on Lemma \ref{main_lemma} and the following
continuity property of the inhomogeneous minima which follows from a more general result of Gruber \cite[Satz 1]{Peter}. We say that a
sequence $S_t$ of {\em star bodies} in $\R^{N-1}$ converges to a
star body $S$ if the sequence of {\em distance functions} of $S_t$
converges uniformly on the unit ball in $\R^{N-1}$ to the distance
function of $S$. For the notions of star bodies, distance functions and  convergence of a sequence of
lattices to a given lattice we refer the reader to Gruber--Lekkerkerker \cite{GrLek}.
\begin{lemma}[Gruber \protect{\cite[Satz 1]{Peter}}]
Let $S_t$ be a sequence of star bodies in $\mathbb R^{N-1}$ which
converges to a bounded star body $S$ and let $L_t$ be a sequence of
lattices in $\mathbb R^{N-1}$ convergent to a lattice $L$. Then
\bea \lim_{t\rightarrow\infty}\mu(S_t,L_t)=\mu(S,L)\,. \eea
\label{limit}
\end{lemma}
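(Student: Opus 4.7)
The plan is to show $\mu(S_t,L_t)\to\mu(S,L)$ by establishing upper and lower semicontinuity separately, phrased through the distance functions $F_{S_t}$ and $F_S$. Recall that $\mu(S,L)=\sup_{x\in\R^{N-1}}\inf_{l\in L}F_S(x-l)$, and since $L$ has a bounded fundamental parallelepiped $P$, the supremum is actually attained on $\overline{P}$. The crucial analytic input I will exploit is that uniform convergence of distance functions on the unit ball, combined with positive homogeneity, upgrades to uniform convergence on every bounded set, and that the boundedness of $S$ furnishes a uniform linear lower bound $F_{S_t}(y)\ge c\|y\|$ valid for all large $t$.

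For the upper bound $\limsup\mu(S_t,L_t)\le\mu(S,L)$, fix $\varepsilon>0$ and write $\mu:=\mu(S,L)$. For each $x\in\overline{P}$ pick $l(x)\in L$ with $F_S(x-l(x))\le\mu$; by continuity of $F_S$ this inequality extends to $F_S(\cdot-l(x))\le\mu+\varepsilon/2$ on an open neighborhood of $x$. Extracting a finite subcover of $\overline{P}$ yields finitely many lattice points $l_1,\dots,l_k\in L$. Since $L_t\to L$, I can choose $l_i(t)\in L_t$ with $l_i(t)\to l_i$; uniform convergence of $F_{S_t}$ to $F_S$ on a suitable bounded set then gives $F_{S_t}(y-l_i(t))\le\mu+\varepsilon$ for $y$ near $x_i$ and $t$ large, which after verifying that these neighborhoods still cover a fundamental domain of $L_t$ produces $\mu(S_t,L_t)\le\mu+\varepsilon$.

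For the lower bound $\liminf\mu(S_t,L_t)\ge\mu(S,L)$, select $x_0$ with $\inf_{l\in L}F_S(x_0-l)\ge\mu-\varepsilon/2$. The uniform linear lower bound on $F_{S_t}$ forces $\inf_{l_t\in L_t}F_{S_t}(x_0-l_t)$ to be attained (up to $\varepsilon$) inside a fixed ball $B(x_0,R)$; this ball contains only finitely many points of $L$, and because $L_t\to L$ the set $L_t\cap B(x_0,R+1)$ is in natural bijection with $L\cap B(x_0,R+1)$ for $t$ large, with corresponding points close to one another. Uniform convergence of $F_{S_t}$ on $B(x_0,2R)$ then yields $F_{S_t}(x_0-l_t)\ge F_S(x_0-l)-\varepsilon/2\ge\mu-\varepsilon$ for every such $l_t$, hence $\mu(S_t,L_t)\ge\mu-\varepsilon$.

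The main obstacle I foresee is the simultaneous variation of both $S_t$ and $L_t$: the lattice points over which one infimizes are themselves moving, so one must pin down a finite set of ``relevant'' lattice points and track them uniformly. The cleanest way to handle the bookkeeping is to work with bases: if a basis matrix of $L_t$ converges to a basis matrix of $L$, then lattice points are indexed by common $\Z^{N-1}$-coordinates and convergence becomes coordinate-wise on compact index sets, after which the uniform convergence of the distance functions closes the argument.
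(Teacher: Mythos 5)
The paper offers no proof of this lemma: it is imported wholesale from Gruber's 1967 article (cited as Satz~1 of \cite{Peter}) and used as a black box, so there is no in-paper argument to compare your attempt against. Your direct proof is nonetheless sound. The split into upper and lower semicontinuity is the right decomposition, and the two analytic ingredients you isolate do the work: positive $1$-homogeneity promotes uniform convergence of $F_{S_t}\to F_S$ from the unit sphere to every bounded set, and boundedness of $S$ yields, for $t$ large, a uniform linear lower bound $F_{S_t}(y)\ge c\|y\|$ with $c$ just below $\min_{\|u\|=1}F_S(u)>0$, which reduces both bounds to finitely many lattice points on a compact set. The bookkeeping you flag at the end closes exactly as you suggest once one fixes basis matrices $A_t\to A$ for $L_t\to L$: lattice points are then indexed by a common copy of $\Z^{N-1}$; for the upper bound the finite open cover of $\overline P$ swallows the fundamental parallelepipeds $\overline{P_t}=A_t[0,1]^{N-1}$ for large $t$, while for the lower bound the finitely many $l_t\in L_t\cap B(x_0,R)$ carry the same $\Z^{N-1}$-coordinates as points of $L\cap B(x_0,R+1)$ and converge to them, after which uniform convergence of the distance functions on a bounded set finishes. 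Whether this coincides with Gruber's original route I cannot say, but as written your sketch is a correct, self-contained argument for the stated continuity.
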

For the proof of  Theorem \ref{only_asymptotic} we may assume
that ${\ve \alpha}\in \Q^{N}$ and
\be 0<\alpha_1<\alpha_2<\ldots <\alpha_{N-1}<
1\,.\label{conditions_on_alpha} \ee
The simplex
\bea S_{\ve \alpha}(1)=\{(x_1,\ldots,x_{N})\in\R_{\ge 0}^N: \sum_{i=1}^{N-1}\alpha_i x_i + x_N= 1\}
\eea
contains a ball of radius
\bea
r_{\ve \alpha}(1)=\frac{||{\ve \alpha}||}{\sum_{i=1}^N ||{\ve \alpha}[i]||\alpha_i}\,.
\eea
Let now $R_{\ve\alpha}$ be the radius of a ball containing $S_{\ve \alpha}(1)$, and let
$c({\ve \alpha})= r_{\ve\alpha}(1)/R_{\ve \alpha}$.
Recall that $V_{\ve \alpha}$ denotes the $(N-1)$---dimensional subspace of $\R^N$ orthogonal to the vector
${\ve \alpha}=(\alpha_1, \alpha_2,\ldots,\alpha_{N-1}, 1)$.
For any $M> 0$ one can choose a lattice $L_M\subset V_{\ve \alpha}$ of
determinant $1$ with
\be \mu(B^{N-1}_1\cap V_{\ve \alpha},L_M)>\frac{4M}{c({\ve \alpha})}\,.\label{Step_1}\ee
Since the inhomogeneous minima are independent of translation and since rational
lattices are dense in the space of all lattices, by Lemma
\ref{limit}, we may assume that $L_M\subset\Q^{N}$.
Applying Lemma \ref{main_lemma} to the lattice $L_M$ , we get a sequence
${\ve
a}(t)$, where by
(\ref{conditions_on_alpha}),
\bea 0<a_1(t)<a_2(t)<\ldots <a_{N}(t)\,\eea
for sufficiently large $t$.
%

Observe that (\ref{asympt_alpha}) implies
(\ref{Density2}) with $a_i=a_i(t)$, $i=1,\ldots,N$, and $t$ large
enough. Next we show that, for sufficiently large $t$,  inequality
(\ref{Sharpness2}) also holds. To this end we define
the lattice $\Gamma_t$ by
\bea \Gamma_t=||{\ve a}(t)||^{-1/(N-1)}\Lambda_{{\ve a}(t)}.
\eea
By (\ref{asympt_aij}) and (\ref{asympt_h}),
the sequence of lattices
$L_t=\pi(\Gamma_t)$ converges to the lattice $L=\pi(L_M)$.
%
Now put ${\ve \alpha}(t)=(a_1(t)/a_N(t),\ldots,a_{N-1}(t)/a_N(t),1)$.
The simplex $S_{{\ve \alpha}(t)}$ has the form
%
%
%
\bea S_{{\ve \alpha}(t)}= \left\{(x_1,\ldots,x_{N})\in\R_{\ge 0}^N: \sum_{i=1}^{N-1}\frac{a_i(t)}{a_N(t)} x_i + x_N= 1\right\}\,.\eea
The point ${\ve p}=(1/(2(N-1)), \ldots, 1/(2(N-1)))$ is an inner point
of the simplex $S=\pi(S_{{\ve \alpha}}(1))$ and thus of all the simplicies $S_t=\pi(S_{{\ve
\alpha}(t)})$ for sufficiently large $t$. By (\ref{asympt_alpha}) and
Lemma \ref{limit}, the sequence $\mu(S_t-{\ve p},
L_t)$ converges to $\mu(S-{\ve p}, L)$.
Here we consider the sequence $\mu(S_t-{\ve p},
L_t)$ instead of $\mu(S_t, L_t)$ because the
distance functions of the family of star bodies in Lemma \ref{limit}
need to converge on the unit ball.
Now, since the inhomogeneous minima are independent of translation,
the sequence $\mu(S_t, L_t)$ converges to
$\mu(S, L)$. This clearly implies that the sequence $\mu(S_{{\ve
\alpha}(t)}, \Gamma_t)$ converges to $\mu(S_{{\ve \alpha}}(1), L_M)$.

Consequently, for all sufficiently large $t$ we have

\bea
\begin{split}
f_N({\ve a}(t)) & =\mu((a_N(t))^{-1}S_{{\ve \alpha}(t)},
||{\ve a}(t)||^{1/(N-1)}\Gamma_t)\\
& =||{\ve a}(t)||^{1/(N-1)}a_N(t)\mu(S_{{\ve
\alpha}(t)},\Gamma_t)
 \\ &>\frac{1}{2}||{\ve a}(t)||^{1/(N-1)}a_N(t)\mu(B^{N-1}_{R_{\ve \alpha}},L_M)
\\ & =\frac{c({\ve \alpha})}{2r_{\ve \alpha}(1)}||{\ve a}(t)||^{1/(N-1)}a_N(t)\mu(B^{N-1}_1,L_M)\,
 \\ &>2M\frac{||{\ve a}(t)||^{1/(N-1)}a_N(t)}{r_{\ve \alpha}(1)}
\\ & > M\frac{||{\ve a}(t)||^{1/(N-1)}a_N(t)}{r_{{\ve \alpha}(t)}(1)}\,
=M\frac{\sum_{i=1}^N ||{\ve a}(t)[i]||a_i(t)}{||{\ve a}(t)||^{1-1/(N-1)}}=M s({\ve a}(t))\,.
\end{split}
\eea
The theorem is proved.

\section{Acknowledgement}

The authors wish to thank Professor Anatoly Zhigljavsky for
valuable comments and discussions.

\end{document}